\theoremstyle{definition}
\newtheorem{definition}{Definition}
\theoremstyle{plain}
\newtheorem{theorem}{Theorem}
\newtheorem{corollary}{Corollary}
\newcommand*\z{\ensuremath{\mathbf z}}
\newcommand*\Bell{\ensuremath{\boldsymbol\ell}}
\newcommand*\Bvarphi{\ensuremath{\boldsymbol\varphi}}
\title{Always detectable eigenfunctions on metric graphs}
\author{Pavel Kurasov}
\address{Dept. of Mathematics, Stockholm Univ., 106 91 Stockholm, SWEDEN}
\email{kurasov@math.su.se}
\begin{document}

\maketitle

\begin{abstract}
It is proven following \cite{PlTa} that Laplacians with standard vertex continuous on metric trees and 
with standard and Dirichlet conditions on arbitrary metric graphs possess an infinite sequence of
simple eigenvalues with the eigenfunctions not equal to zero in any non-Dirichlet vertex.
\end{abstract}

\section{Introduction}

Differential operators on metric graphs attract attention of both mathematicians and physicists due
to their unusual but rather elegant spectral properties \cite{BeKu,KuBook,KuSa,Hul2004,Lawniczak2020,LawniczakSciR2021,BaAPPA2021}.
 One such property is
that the eigenfunctions may have support not coinciding with the whole graph, or may just vanish at the
vertices leading to problems when defining nodal domains. Moreover, if one of the eigenfunctions is
vanishing at a vertex $ V_0$, then it is not seen in the Titchmarsh-Weyl $ M$-function associated
with this vertex \cite{KuBook,17Warszawa}:
\begin{equation}
M_{V_0} (\lambda) = - \left( \sum_{n=1}^\infty \frac{| \psi_n (V_0) |^2}{\lambda_n - \lambda} \right)^{-1} 
\end{equation}

This note is inspired by the recent paper \cite{PlTa} devoted to generic eigenfunctions for metric
graphs. By generic eigenfunctions one means the eigenfunctions that are different from zero 
at all vertices and corresponds to simple eigenvalues. For such eigenfunctions the nodal domains
are always clearly defined. It appears that the result can be proven much easier using the language of
multivariate secular polynomials describing the spectrum of metric graphs.

\section{Preliminaries}

Let $ \Gamma $ be a metric graph formed from $ N$ compact edges $ E_n = [x_{2n-1}, x_{2n}] $ of lengths
$ \ell_n = x_{2n} - x_{2n-1} $ joined
together in $ M $ vertices $ V_m $ understood as partitions of the endpoints $ \mathbf V = \{ x_j \}_{j=1}^{2N}. $
The points belonging to different equivalence classes $ V_m $ are identified.
In the Hilbert space $ L_2 (\Gamma) = \oplus_{n=1}^N L_2 (E_n) $
consider the Laplacian $ L = - \frac{d^2}{dx^2} $ defined on the functions from the Sobolev space
$ u \in W_2^2 (\Gamma) = \oplus_{n=1}^N W_2^2 (E_n) $, which 
at every vertex $ V_m $ satisfy either standard (continuity and Kirchhoff) conditions
\begin{equation} \label{vcst}
\left\{
\begin{array}{l}
u \; \mbox{is continuous at $ V_m$},  \\[2mm]
\sum_{x_j \in V_m} \partial u (x_j) = 0 ,
\end{array}
\right.
\end{equation}
or Dirichlet conditions
\begin{equation} \label{vcD}
u(x_j) = 0, \quad x_j \in V_m.
\end{equation}
The directed derivatives $ \partial u(x_j) = - (-1)^j u' (x_j) $ are taken in the direction pointing inside the corresponding edge.
We assume that all vertices with Dirichlet conditions have degree one and call these vertices
{\bf Dirichlet} vertices. All other vertices are called {\bf standard}. The Laplacian is a
non-negative self-adjoint operator with discrete spectrum. It is uniquely determined by the metric graph $ \Gamma $,
provided the Dirichlet vertices are indicated. In what follows we are going to refer to
its spectrum as {\bf graph's spectrum}.

Any solution to the eigenfunction equation on the edge $E_n $ can be written using one of the two equivalent 
representations
\begin{equation} \label{eqaabb}
\begin{array}{ccl}
\psi (x, \lambda) & = & 
a_{2n-1} e^{-ik \vert x-x_{2n-1}\vert} + a_{2n} e^{-ik \vert x-x_{2n}\vert} \\
& = &
b_{2n-1} e^{ik \vert x-x_{2n-1} \vert} + b_{2n} e^{ik \vert x-x_{2n}\vert}.
\end{array} 
\end{equation}
Introducing the $2N$-dimensional vectors $ \vec{a} = \{ a_j \}_{j=1}^{2N}, \vec{b} = \{ b_j \}_{j=1}^{2N}$ one 
gets two linear relations:
$$ \vec{b} = \mathbf S_{\rm e} (k) \vec{a}, \quad \vec{b} = \mathbf S_{\rm v}  \vec{a}, $$
with
\begin{equation}
\mathbf S_{\rm e}  (k) = {\rm diag} \left\{ \left( \begin{array}{cc}
0 & e^{i k \ell_n} \\
e^{i k \ell_n} & 0 
\end{array} \right) \right\}_{n=1}^N
\end{equation}
and $ \mathbf S_{\rm v} $ formed from the vertex scattering matrices $ S_m, \; m =1,2, \dots, M$ given by
$$ S_m =S^{\rm st}_{d_m} = - \mathbf I_{d_m} + \frac{d_m}{2} \left(
\begin{array}{cccc}
1 & 1 & \dots & 1 \\
1 & 1 & \ddots & 1 \\
\vdots & \vdots & \ddots & 1 \\
1 & 1 & \dots & 1 \end{array} \right), \quad S_m= S^D = -1 $$
for standard vertices of degree $d_m$ and for Dirichlet vertices, respectively.
The vertex scattering matrix $ \mathbf S_{\rm v} $ has block-diagonal structure if one permutes
the endpoints collecting together the points belonging to each of the vertices. 
The first relation comes directly from \eqref{eqaabb} and the second one is obtained by substituting
solutions \eqref{eqaabb} into vertex conditions \eqref{vcst} and \eqref{vcD}.

The positive spectrum $ \lambda = k^2 $ can be described as zeroes of the secular function \cite{GuSm,KuNo,KuBook}
\begin{equation}
p(k) := \det \Big( \mathbf S_{\rm e} (k) - \mathbf S_{\rm v} \Big),
\end{equation}
which
is a trigonometric polynomial. 

Following \cite{BaGa,KuSa} let us introduce the secular polynomials in $ N $ complex variables $ \z = (z_1, z_2, \dots, z_N) $
\begin{equation}
P_G (\z) = \det \Big( \mathbf E (\z) - \mathbf S_{\rm v} \Big), \quad \mathbf E (\z) = {\rm diag} \left\{ \left( \begin{array}{cc}
0 & z_n \\
z_n & 0 
\end{array} \right) \right\}_{n=1}^N,
\end{equation}
so that we have
\begin{equation}
p(k) = P (e^{ i k \Bell}), \quad \Bell = (\ell_1, \ell_2, \dots, \ell_n).
\end{equation}
Note that the secular polynomial $ P_G $ is determined by the discrete graph $ G $ corresponding to $ \Gamma $ and by
the set of Dirichlet vertices, but is independent of the metric structure of $ \Gamma$.
The spectrum of the metric graph is given by the intersections of the curve $ e^{ i k \Bell} $ with
the zero set $ \mathit Z_G$ of the secular polynomial
$$ \mathit Z_G = \left\{ \z \in \mathbb T^N \subset \mathbf C^N: P(\z) = 0 \right\} ,$$
where $ \mathbb T = \{ z \in \mathbb C: | z | = 1 \} $ is the unit torus.
We shall also use real coordinates $ \Bvarphi, z_n = e^{i \varphi_n} $, then the spectrum is given by intersections
of the line $ k \Bell $ and  the zero set
$$ \mathbf Z_G = \left\{ \Bvarphi \in \mathbf T^N : P(e^{i\Bvarphi}) = 0 \right\} ,$$
with $ \mathbf T = [0, 2 \pi ] $ being the real torus.
The zero set $ \mathbf Z_G $ in general is an $N-1$-dimensional singular surface on the torus.

\section{Main theorem}

\begin{definition} \label{Def1}
An eigenvalue $ \lambda_n $ and the corresponding eigenfunction are called {\bf generic} if and only if
\begin{enumerate}
\item the eigenvalue is simple;
\item the corresponding eigenfunction does not vanish in any of
the vertices other than at the Dirichlet vertices.
\end{enumerate} 
\end{definition}

\begin{theorem}[Theorems 1 and 2 from \cite{PlTa}] \label{Th1}
Let $ L( \Gamma) $ be the Laplace operator on a finite compact metric graph $ \Gamma$
 with standard and Dirichlet vertex conditions.
Then there exists an infinite sequence of generic eigenfunctions
attaining positive values at the vertices,  provided that either
\begin{itemize}
\item the graph is a tree,
\newline or
\item the set of Dirichlet vertices is non-empty.
\end{itemize}
\end{theorem}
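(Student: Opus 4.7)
My plan is to translate the conditions of Definition~\ref{Def1} into polynomial non-vanishing conditions on $\mathit Z_G$, and then to invoke equidistribution of the trajectory $k \mapsto e^{ik\Bell}$. A point $\z^\ast \in \mathit Z_G$ corresponds to a simple eigenvalue exactly when $\z^\ast$ is a regular point of $\mathit Z_G$, equivalently $\dim\ker(\mathbf E(\z^\ast) - \mathbf S_{\rm v}) = 1$; the generator $\vec a$ of this kernel is the amplitude vector of the eigenfunction. The standard-vertex block of $\mathbf S_{\rm v}$ yields $\psi(V_m) = \tfrac{2}{d_m}\sum_{x_j \in V_m} a_j$, and since $\vec a$ is determined up to scale by cofactors of $\mathbf E(\z) - \mathbf S_{\rm v}$, this sum is a polynomial $R_{V_m}(\z)$ in the $z_n$'s. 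Thus the ``bad'' locus $\mathit Z_G^{\rm bad}$, consisting of the singular locus of $\mathit Z_G$ together with the zero sets of the $R_{V_m}$, is cut out by an auxiliary polynomial that I would track explicitly.

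The main obstacle is showing that $\mathit Z_G^{\rm bad}$ is a proper subset of $\mathit Z_G \cap \mathbb T_{\Bell}$, where $\mathbb T_{\Bell}$ is the closure subtorus of the trajectory, and this is exactly where the hypotheses enter. For a graph with a Dirichlet vertex, I would exhibit a good point by exploiting the $(-1)$-eigenvalue of the Dirichlet scattering block: a Dirichlet pendant edge forces amplitudes that propagate nontrivially through every standard vertex. For a tree with only standard conditions, I would induct on the number of edges by cutting a pendant edge, applying the hypothesis to the reduced tree, and extending the eigenfunction back through the pendant without introducing new vertex zeros. The hypotheses are indeed essential: on an equilateral cycle with only standard conditions every positive eigenvalue is doubly degenerate, so $\mathit Z_G^{\rm bad}$ would coincide with all of $\mathit Z_G \cap \mathbb T_{\Bell}$.

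Once $\mathit Z_G^{\rm bad} \subsetneq \mathit Z_G \cap \mathbb T_{\Bell}$ is established, the remainder is soft. By Kronecker--Weyl the trajectory $\gamma(k) = e^{ik\Bell}$ is equidistributed on $\mathbb T_{\Bell}$, so it approaches any chosen regular good point $\z^\ast \in (\mathit Z_G \cap \mathbb T_{\Bell}) \setminus \mathit Z_G^{\rm bad}$ arbitrarily closely; each near-approach produces a transverse crossing of $\gamma$ with $\mathit Z_G$, hence a generic eigenvalue. For the positivity statement I would further restrict to a single connected component of the regular good stratum, where each $R_{V_m}$ is continuous and nowhere zero and therefore of constant sign; a global choice of sign for the eigenfunction then renders all vertex values positive simultaneously along the corresponding subsequence of eigenvalues.
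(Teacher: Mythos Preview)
Your overall architecture coincides with the paper's: locate one regular point $\z^\ast\in\mathit Z_G$ at which every standard-vertex value is nonzero, then use recurrence of the flow $k\mapsto e^{ik\Bell}$ on the torus to produce infinitely many nearby crossings, each yielding a generic eigenvalue with vertex values close to (hence of the same sign as) those at $\z^\ast$. The divergence is entirely in how the seed point $\z^\ast$ is exhibited, and here the paper is far more concrete than your sketch. In the Dirichlet case the paper simply takes the \emph{ground state}: it is simple on a connected graph, strictly positive away from Dirichlet vertices, and the Hadamard formula $d\lambda_1/d\ell_n = -\big((\psi_1')^2+\lambda_1\psi_1^2\big)$ forces $\nabla P_G\neq 0$ at the corresponding torus point since $\lambda_1>0$. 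Your proposal to ``propagate amplitudes from the Dirichlet pendant nontrivially through every standard vertex'' is not yet an argument; ruling out cancellation at interior vertices is essentially the statement to be proved. In the tree case the paper's device is to pass to the \emph{equilateral} tree and use the eigenvalue $(2\pi/\ell)^2$, whose multiplicity on an equilateral graph is $1+\beta_1=1$ for a tree, with eigenfunction equal to $1$ at every vertex; since this eigenvalue is positive the Hadamard argument now applies and gives regularity at $\Bvarphi=\mathbf 0$. Your induction by cutting a pendant edge is a genuinely different route that could plausibly be completed, but you would still have to check that the good point on the smaller tree extends to one on the larger tree lying inside the correct subtorus $\mathbb T_{\Bell}$, with no zero introduced at the attachment vertex.

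A smaller issue: your positivity step via ``constant sign of $R_{V_m}$ on a connected component'' needs the real structure tracked explicitly, since the cofactor polynomials are complex-valued on $\mathbb T^N$ and only a particular real normalisation of $\vec a$ gives real vertex values. The paper bypasses this entirely: the vertex values of the approximating eigenfunctions converge to those of the chosen reference eigenfunction (the ground state, respectively the constant function $1$), which are strictly positive, so positivity along the subsequence is immediate.
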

\begin{proof}
It is well-known that the order of positive zeroes of the secular function $ p(k) $ coincides with
the multiplicity of the corresponding eigenvalues \cite{BeKu,KuBook}. Hence to satisfy the first
genericness condition (in Definition \ref{Def1})
it is sufficient that the line $ k \Bell $ crosses $ \mathbf Z_G $ at a regular point.

Assume first that the graph has a Dirichlet vertex, then the line $ k \Bell , \; k > 0 $ for any choice of
the vector $ \Bell$ crosses
first the zero set $ \mathbf Z_G $ at a regular point $ \Bvarphi^1= \Bvarphi^1 (\Bell)$. The eigenvalue $ \lambda^1 = (k^1)^2 $ is
determined by $ k^1 \Bell = \Bvarphi^1$.
The ground state is always simple
(for connected graphs) \cite{KuLMP} and the corresponding eigenfunction can be chosen strictly positive, that is positive everywhere
except at the Dirichlet vertices.
Moreover its dependence on the edge lengths is described by
 Hadamard-type formula (see {\it e.g.} (3.13) in \cite{BeKeKuMuTrans} following
 \cite{Fr05-IJM,CdV,BaLe17})
$$ \frac{d \lambda_1}{d \ell_n} = - \Big( \psi'_1 (x)^2 + \lambda_1 \psi_1 (x)^2 \Big) \vert_{x \in E_n}, $$
connecting the derivative of the ground state energy $ \lambda_1 $ as the length of the edge $ E_n $ changes
to Pr\"ufer's amplitude of
the eigenfunction $ \psi_1 $ on the edge. The derivatives above are zero if and only if
$ \psi'_1 (x) = 0 $ and $ \lambda_1 = 0$, that is if the ground state is a constant function, which is
not the case if Dirichlet vertices are present. It follows that $ \nabla P(\z^1) \neq \mathbf 0 , \z^1 = e^{i \Bvarphi^1}$,
and this inequality holds in a neighbourhood of $ \z^1$. Independently of whether the edge
lengths are rationally dependent or not the crossing points between $ k \Bell $ and $ \mathbf Z_G $
contain an infinite sequence $ \Bvarphi^j $ tending to $ \Bvarphi^1$: 
\begin{enumerate}[-]
\item if the edge lengths are pairwise rationally dependent, then the line $ k \Bell$ passes $ \Bvarphi^1 $
infinitely many times;
\item if the edge lengths are not pairwise rationally dependent, then the intersection points do not
coincide with $ \Bvarphi^1 $, but approaches it as $ j \rightarrow \infty$.
\end{enumerate}
As $ \Bvarphi^j \xrightarrow[j \rightarrow \infty]{ }  \Bvarphi^1 $ the corresponding vector $ \vec{a} = \vec{a} (\Bvarphi^j) $ solving
$$ \big( \mathbf E (e^{ik \Bvarphi^j})   - \mathbf S_{\rm v} \big)  \vec{a} = 0 $$
converges to $ \vec{a} (\Bvarphi^1). $ Similarly
$$ \vec{b} (\Bvarphi^j) = \mathbf S_{\rm v} \vec{a}( \Bvarphi^j) \xrightarrow[j \rightarrow \infty]{ }  \mathbf S_{\rm v} \vec{a}( \Bvarphi^1) =  \vec{b} ( \Bvarphi^1) .$$ 
It follows
that 
$$ \psi_{\lambda^j} (x_i) = a_i ( \Bvarphi^j) + b_i ( \Bvarphi^j)  \xrightarrow[j \rightarrow \infty]{ }  \psi_{\lambda^1} (x_i) \neq 0,$$
where $ \lambda^j = (k^j)^2 $denotes the eigenvalue associated with the crossing point $  \Bvarphi^j = k^j \Bell$ and 
provided $ x_j $ does not belong to a Dirichlet vertex. It follows that, may be taking a subsequence, all
eigenvalues $ \lambda^j $ are not only simple, but the corresponding eigenfunctions
are different from zero at non-Dirichlet vertices.

It remains to consider the case where $ \Gamma $ is a tree with only standard vertex conditions.
In the proof above it was sufficient to have one generic eigenvalue such that the associated crossing point
is a regular point in the zero set $ \mathbf Z_G$.  One possible candidate is the ground state $ \lambda_1 = 0 $ and 
the
eigenfunction identically equal to $1 $. The corresponding Pr\"ufer amplitude is zero and we cannot conclude that
the point $  \Bvarphi = \mathbf 0 $ is a regular point in $ \mathbf Z_G$. Consider the corresponding equilateral
tree $ \mathbf T $ with all edge lengths equal to $ \ell_n = \ell$. The spectrum is then periodic with period
$ \frac{2 \pi}{\ell} $. Consider the eigenfunction associated with the eigenvalue $ \left( \frac{2 \pi}{\ell} \right)^2 $, its
multiplicity is equal to $ 1 $ \cite{KuJFA}.\footnote{The multiplicity of the eigenvalue   $ \left( \frac{2 \pi}{\ell} \right)^2 $ for
equilateral graphs in general is equal to $ 1+ \beta_1$, where $ \beta_1 $ is the first Betti number - the number of cycles
in a graph. In the case of trees $ \beta_1 = 0 $.} This function attains $1$ at all vertices  (the same value as the ground state
eigenfunction)
and
the corresponding eigenvalue is non-zero, hence Pr\"ufer amplitudes are different from zero. It follows that
the point $  \Bvarphi = (2\pi, 2\pi, \dots, 2 \pi) $ and hence the point $  \Bvarphi = \mathbf 0 $ is a regular point in $ \mathbf Z_G$.
It remains to repeat the arguments used in the first part of the proof: for any choice of the edge lengths there is a 
sequence $  \Bvarphi^j $ of intersection points between $ k \Bell $ and $ \mathbf Z_G$ approaching $  \Bvarphi^1 = \mathbf 0 $.
The corresponding eigenvalues are simple and the eigenfunction values at each vertex approach $1$. Taking a subsequence, if necessary,
we get an infinite sequence of generic eigenvalues.
\end{proof}

To our opinion the new proof explains the reason why conditions in the theorem are needed. Our proof is based on
the existence of a single generic eigenfunction. The ground state eigenfunction is a good candidate since we know that
it is always generic. The proof goes well for graphs with Dirichlet points, but for graphs with standard conditions one needs to
prove that the hypersurface $ \mathbf Z_G $ is regular in a neighbourhood of $ \mathbf 0 $. If the graph has cycles, then
the point $ \mathbf 0 $ is not regular $ \nabla P_G(\mathbf 1) = 0 $.  It might happen that $ P_G $ is a square of the first degree
polynomial like in the case of the cycle graph: $ P (z_1) = (z_1-1)^2$, then the set $ \mathbf Z $ is a smooth surface, but
all non-zero eigenvalues are double. 

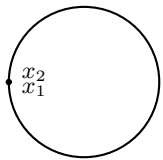
\begin{figure}[H]
\centering
\begin{tikzpicture}[scale=0.5]

\draw[thick] (2,0) circle (2);

\filldraw [black] (0,0) circle (2pt);

\node at (0.1,-0.2) [anchor=west] {\small$x_1$};

\node at (0.1,0.2) [anchor=west] {\small$x_2$};

 \end{tikzpicture}
\caption{The circle graph.}
\label{Fig2.4}
\end{figure}

In fact the circle graph provides a counterexample as it is impossible to
find an infinite sequence of {\bf simple} eigenvalues.

The theorem has another interesting implication
\begin{corollary}
Under conditions of Theorem \ref{Th1} there always exists an infinite sequence of
simple eigenvalues with an even/odd number of nodal domains, provided
the Euler characteristic $ \chi = M-N $ is even/odd, respectively.
\end{corollary}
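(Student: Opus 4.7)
The plan is to combine Theorem~\ref{Th1} with an Euler-characteristic count of the nodal domains. Let $(\lambda^j)$ denote the generic sequence produced by the theorem, with crossing points $\Bvarphi^j\to\Bvarphi^1$ and $\psi^j(V)>0$ at every standard vertex for $j$ large.

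First I would derive the identity
$$ \nu = \chi + \mu + \sigma, $$
where $\mu$ is the number of interior zeros of a generic eigenfunction and $\sigma=\sum_i\beta_1(\overline{D_i})$ is the total first Betti number of the nodal-domain closures. Subdividing $\Gamma$ by a new degree-two vertex at each interior zero preserves $\chi$; a direct double count (each interior-zero vertex lies in two domain closures, each standard or Dirichlet vertex in one, each subdivided edge in one) yields $\sum_i\chi(\overline{D_i})=\chi+\mu$, and the left-hand side equals $\nu-\sigma$ because every $D_i$ is connected. Hence $\nu\equiv\chi\pmod 2$ reduces to showing that $\mu+\sigma$ is even.

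Next I would control $\mu^j$. Since $\Bvarphi^1$ is a regular point of $\mathbf Z_G$ whose eigenfunction $\psi^1$ does not vanish at any non-Dirichlet vertex and has nonzero one-sided derivative at each Dirichlet vertex, a small open neighbourhood of $\Bvarphi^1$ inside the generic locus of $\mathbf Z_G$ consists entirely of points whose eigenfunctions share the same vertex-sign pattern and the same one-sided signs at Dirichlet vertices as $\psi^1$. For $j$ large, $\Bvarphi^j$ lies in this neighbourhood, so on every edge $E_i$ the eigenfunction $\psi^j|_{E_i}$ starts and ends with the same signs as $\psi^1|_{E_i}$. Consequently the parity of $m^j_i$ (the number of interior zeros on $E_i$) equals the parity of $m^1_i$ for each $i$. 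In both cases of the theorem $\psi^1$ is zero-free on every edge, namely the ground state in the Dirichlet case and the constant function in the tree case (with regularity of $\mathbf Z_G$ at $\mathbf 0$ supplied by the $n=2$ equilateral-tree reference used in the proof of Theorem~\ref{Th1}), so $m^1_i=0$ for every $i$; thus every $m^j_i$ is even and $\mu^j$ is even.

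Finally, $k^j\to\infty$ and fixed edge lengths force every edge to carry at least one interior zero of $\psi^j$ once $j$ is large enough. At each such zero the two adjacent subdivided edges lie in nodal-domain closures of opposite sign, so no cycle of $\Gamma$ can be contained in a single $\overline{D_i^j}$; consequently $\sigma^j=0$. Combining these facts gives $\nu^j\equiv\chi\pmod 2$ for all sufficiently large $j$, producing the required infinite sequence. The principal technical step is the ``same-neighbourhood'' argument, which relies crucially on the regularity and genericness of $\Bvarphi^1$ supplied by the proof of Theorem~\ref{Th1}.
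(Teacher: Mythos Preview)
The paper states this corollary without proof, so there is nothing in the text to compare your argument against. Your proof fills that gap and is essentially correct. The identity $\nu=\chi+\mu+\sigma$ follows exactly as you say by subdividing $\Gamma$ at the interior zeros and double-counting vertices and edges among the closures $\overline{D_i}$. The parity step is sound: convergence of the amplitude vectors $\vec a(\Bvarphi^j),\vec b(\Bvarphi^j)$ established in the proof of Theorem~\ref{Th1} yields convergence not only of the vertex values $\psi^j(V)$ but also of the normalised directed derivatives $\partial\psi^j/k^j$ at the Dirichlet endpoints, and the ground state $\psi^1$ has nonzero one-sided derivative at each Dirichlet vertex (else it would vanish identically on the adjoining edge). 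Hence the endpoint sign pattern on every edge stabilises, all zeros are simple, and each $m_i^j$ is even. The claim $\sigma^j=0$ is also fine: the generic eigenvalues are distinct, so $k^j\to\infty$, forcing at least one interior zero on every edge; since the two sub-edges meeting at such a zero carry opposite signs, no cycle of $\Gamma$ can lie in a single $\overline{D_i^j}$. Two minor remarks: in the tree case $\sigma^j=0$ is automatic, so your final step is only needed in the Dirichlet case with $\beta_1(\Gamma)>0$; and ``zero-free on every edge'' for $\psi^1$ should be read as ``no interior zeros'', since $\psi^1$ does vanish at the Dirichlet vertices themselves.
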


\end{document}